\documentclass[10pt]{article}
\usepackage{amsmath,amssymb,amsfonts,amsthm}
\usepackage{graphicx,color,subfigure,enumerate,multirow,longtable}
\usepackage[]{hyperref}
\usepackage{fullpage}

\newcommand\RR{\ensuremath{\mathbb{R}}}

\newcommand\ZZ{\ensuremath{\mathbb{Z}}}

\newtheorem{thm}{Theorem}
\newtheorem{prop}[thm]{Proposition}
\newtheorem{cor}[thm]{Corollary}
\newtheorem{lem}[thm]{Lemma}

\newtheorem{rem}[thm]{Remark}

\title{Courant-sharp eigenvalues of the three-dimensional square torus}
\author{Corentin L\'ena\footnote{Department of Mathematics \emph{Guiseppe Peano}, University of Turin, Via Carlo Alberto, 10, 10123 Turin, Italy 
\texttt{clena@unito.it}}}

\begin{document}
\maketitle
\begin{abstract}
In this paper, we determine, in the case of the Laplacian on the flat three-dimensional torus $(\mathbb{R}/\mathbb{Z})^3\,$, all the eigenvalues 
having an eigenfunction which satisfies the Courant nodal domains theorem with equality (Courant-sharp situation). Following the strategy of {\AA}.~Pleijel (1956), the proof is a combination of  an explicit lower bound of the counting function and a Faber--Krahn-type inequality for domains on the torus, deduced as, in the work of P.~B\'erard and D.~Meyer (1982), from an isoperimetric inequality. This inequality relies on the work of L.~Hauswirth, J.~Perez,  P.~Romon, and A.~Ros (2004) on the periodic isoperimetric problem.
\end{abstract}

\paragraph{Keywords.}    Nodal domains, Courant theorem, Pleijel theorem, isoperimetric problem, torus.

\paragraph{MSC classification.}  	Primary: 35P05; Secondary: 35P15, 35P20, 58J50.

\section{Introduction}

A well-known result by R.~Courant in \cite{Cou23Nodal} (see also \cite{CouHil53,Ple56}) gives an upper bound on
the number of nodal domains of an eigenfunctions of the Laplacian. If $\Omega \subset \RR^n$ is an  open, bounded, and connected domain, with a sufficiently regular boundary, if $u$ is an eigenfunction of the Laplacian with a Dirichlet or Neumann boundary condition, ²associated with the $k$-th eigenvalue $\lambda_k(\Omega)\,$, the eigenvalues being arranged in non-decreasing order and counted with multiplicities, then $u$ has at most $k$ nodal domains. In \cite{Ple56}, {\AA}.~Pleijel sharpened this result by showing that, for a given domain $\Omega$ in $\RR^2$, and for the Dirichlet boundary condition, an eigenfunction associated with $\lambda_k(\Omega)$ has less than $k$ nodal domains, except for a finite number of indices $k\,$. This  was generalized in \cite{BerMey82} by P.~B\'erard and D.~Meyer to the case of a compact Riemannian manifold, with or without boundary, with the Dirichlet condition on the boundary, in any dimension. It has been shown by I. Polterovich in \cite{Pol09}, using estimates from \cite{TotZel09}, that the analogous result also holds in the case of the Neumann boundary condition, for a domain in $\RR^2$ with a piecewise-analytic boundary.

These results leave open the question of determining, for a specific domain or manifold, all the cases of equality. This problem has been the object of much attention in recent years, motivated in part by its connection with a minimal partition problem (see \cite{HelHofTer09}). It is stated in \cite{Ple56} that when $\Omega$ is a square, equality can only occur for eigenfunctions having one, two or four nodal domains, associated with the first, the second or the third (which are equal), or the fourth eigenvalue respectively. A complete proof of this fact is given by P. B\'erard and B. Helffer in   \cite{BerHel15Square}. The case of the disk with the Dirichlet boundary condition is treated in \cite{HelHofTer09} and the case of the sphere in \cite{Ley96,HelHofTer10a}. The case of the flat torus $(\RR/\ZZ)^2$ has been studied in \cite{Len15Torus}. The present work is a continuation of this paper in the three-dimensional case. The cases of the equilateral torus, and  of some triangles with a Dirichlet boundary condition are investigated in \cite{BerardHelffer2015Triangle}. The case of the square and the right-angled isosceles triangle with the Neumann boundary condition are treated in \cite{HelfferPersson2015SquareN} and \cite{BanBerFaj2015} respectively. A first three-dimensional example was studied by B. Helffer and R. Kiwan in \cite{HelKiw15Cube}: the cube with a Dirichlet boundary condition. In \cite{HelPer15Balls}, all the cases of equality for the sphere, and for the ball with a Dirichlet or Neumann boundary condition, are determined, in dimension greater than $2\,$. In the present paper, we will show that for the flat torus
$(\mathbb{R}/\mathbb{Z})^3\,$, equality in the Courant nodal domain theorem holds only for eigenfunctions having one or two nodal domains, respectively associated with the first eigenvalue, or eigenvalues two to seven (which are equal). This provides us with another three-dimensional example. 

Let us fix some definitions and notation that will be used in the sequel. In the rest of this paper, $\mathbb{T}^3$ stands for the three-dimensional torus
\[\mathbb{T}^3=(\mathbb{R}/\mathbb{Z})^3\]
equipped with the standard flat metric, and $-\Delta_{\mathbb{T}^3}$ stands for the (non-negative) Laplace-Beltrami operator on $\mathbb{T}^3\,$. If $\Omega$ is an open set in $\mathbb{T}^3$ with a sufficiently regular boundary, we write $(\lambda_k(\Omega))_{k\ge 1}$ for the eigenvalues of $-\Delta_{\mathbb{T}^3}$ in $\Omega$ with the Dirichlet boundary condition on $\partial \Omega\,$, arranged in non-decreasing order and counted with multiplicity. In particular, $\lambda_k(\mathbb{T}^3)$ is the $k$-th eigenvalue of $-\Delta_{\mathbb{T}^3}$. If $u$ is an eigenfunction of $-\Delta_{\mathbb{T}^3}\,$, we call \emph{nodal domains} of $u$ the connected components of $\mathbb{T}^3\setminus u^{-1}(\{0\})\,$, and we denote by $\nu(u)$  the cardinal of the set of nodal domains.
With any eigenvalue $\lambda$ of $-\Delta_{\mathbb{T}^3}\,$, we associate the integer
\[\kappa(\lambda)=\min\{k\in \mathbb{N}^{*} \,:\,\lambda_k(\mathbb{T}^3)=\lambda \}\,.\]
Let us state the Courant theorem in this notation.
\begin{thm}
	\label{thmCourant}
	For any eigenvalue $\lambda$ of $-\Delta_{\mathbb{T}^3}$ and any eigenfunction $u$ associated with $\lambda$, $\nu(u)\le \kappa(\lambda)\,$.
\end{thm}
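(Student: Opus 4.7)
The plan is to follow Courant's classical argument, adapted to the closed manifold $\mathbb{T}^3$. Set $k=\kappa(\lambda)$ and suppose, aiming for a contradiction, that an eigenfunction $u$ associated with $\lambda$ admits $\nu(u)\ge k+1$ nodal domains. Pick any $k+1$ of these, say $D_1,\ldots,D_{k+1}$, and define $v_i:=u\,\mathbf{1}_{D_i}$, extended by zero to all of $\mathbb{T}^3$. Since the flat Laplacian has real-analytic coefficients, $u$ itself is real-analytic, so the nodal set $u^{-1}(\{0\})$ is a real-analytic variety of codimension one with finite two-dimensional Hausdorff measure. This regularity is enough to ensure that each $v_i$ belongs to $H^1(\mathbb{T}^3)$ and satisfies $\int_{\mathbb{T}^3}|\nabla v_i|^2=\lambda\int_{\mathbb{T}^3}v_i^2$, by testing the eigenfunction equation for $u$ on $D_i$ against $v_i$ and using that $v_i$ vanishes on $\partial D_i$.

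Next, let $\phi_1,\ldots,\phi_{k-1}$ be an $L^2$-orthonormal basis of the sum of the first $k-1$ eigenspaces of $-\Delta_{\mathbb{T}^3}$. Inside the $(k+1)$-dimensional space $\operatorname{span}(v_1,\ldots,v_{k+1})$, I would look for $w=\sum_{i=1}^{k+1}c_i v_i$ satisfying the $k-1$ orthogonality conditions $\langle w,\phi_j\rangle_{L^2}=0$ together with the additional requirement that the coefficient $c_{k+1}$ vanish. This amounts to $k$ homogeneous linear equations in $k+1$ unknowns, so a nonzero solution $w=\sum_{i=1}^{k}c_i v_i$ exists. Because the $v_i$ have pairwise disjoint supports, a direct computation gives $\int|\nabla w|^2=\lambda\int w^2$. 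The min-max characterization then yields $\lambda_k(\mathbb{T}^3)\le\lambda$; combined with $k=\kappa(\lambda)$, equality must hold in the Rayleigh quotient, so $w$ is itself an eigenfunction of $-\Delta_{\mathbb{T}^3}$ associated with $\lambda$.

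To conclude, observe that $w$ vanishes identically on the open set $D_{k+1}$. Being a real-analytic eigenfunction on the connected manifold $\mathbb{T}^3$ (or, more generally, by the unique continuation principle for second-order elliptic operators), $w$ must then vanish on all of $\mathbb{T}^3$, contradicting the nontriviality of the coefficients $c_i$. Hence $\nu(u)\le k=\kappa(\lambda)$.

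The delicate point, and the one I expect to require the most care, is the preliminary regularity step: that the truncated functions $v_i$ genuinely lie in $H^1(\mathbb{T}^3)$ and that the integration by parts yielding the Rayleigh identity is valid. This is classical on analytic Riemannian manifolds and, for the present setting, is already subsumed in the general framework developed in \cite{BerMey82}; no specifically three-dimensional ingredient is needed, so the proof is essentially a transcription of the standard one.
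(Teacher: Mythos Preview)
The paper does not supply its own proof of Theorem~\ref{thmCourant}; it merely states Courant's nodal domain theorem in the notation adapted to $\mathbb{T}^3$ and refers to \cite{Cou23Nodal,CouHil53,Ple56} for the result. Your argument is the classical Courant proof (in the variant that uses unique continuation to force the contradiction) and is correct as written; the regularity caveat you flag is indeed the only technical point, and your reference to \cite{BerMey82} for the closed-manifold setting is appropriate.

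One minor wording issue: ``the sum of the first $k-1$ eigenspaces'' should read ``the span of an orthonormal system of eigenfunctions for $\lambda_1(\mathbb{T}^3),\ldots,\lambda_{k-1}(\mathbb{T}^3)$'' (a $(k-1)$-dimensional space), since distinct indices $j<k$ may share the same eigenspace. This does not affect the argument.
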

Following \cite{HelHofTer09}, we say that an eigenvalue $\lambda$ of $-\Delta_{\mathbb{T}^3}$ is \emph{Courant-sharp}, if there exists an associated eigenfunction $u$ such that $\nu(u)=\kappa(\lambda)\,$, that is to say if it satisfies the case of equality in the Courant theorem. We will prove the following result.

\begin{thm} \label{thmPlejielTorus3D} The only Courant-sharp eigenvalues of $-\Delta_{\mathbb{T}^3}$ are $\lambda_k(\mathbb{T}^3)$ with $k\in\{1,2,3,4,5,6,7\}\,$.
\end{thm}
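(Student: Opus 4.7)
The plan is to follow the Pleijel strategy in the Bérard--Meyer form: combine an upper bound on $\nu(u)$ coming from a Faber--Krahn-type inequality on $\mathbb{T}^3$ with a lower bound on $\kappa(\lambda)$ coming from lattice-point counting, so as to rule out Courant-sharpness for all but finitely many eigenvalues, and then inspect the small cases by hand. Recall that the spectrum of $-\Delta_{\mathbb{T}^3}$ is $\{4\pi^2 m : m\in\NN\}$ with multiplicity $r_3(m)=\#\{n\in\ZZ^3 : |n|^2=m\}$ and plane-wave eigenfunctions $e^{2\pi i\langle n,x\rangle}$; this makes $\kappa(4\pi^2 m)$ computable explicitly for every~$m$.

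The first analytic input is a Faber--Krahn inequality for Dirichlet eigenvalues of open subsets of $\mathbb{T}^3$. By Hauswirth--Perez--Romon--Ros, there exists an explicit $V_*>0$ such that for every $V\in(0,V_*]$ the isoperimetric regions in $\mathbb{T}^3$ of volume $V$ are round balls, so on this range the isoperimetric profile of $\mathbb{T}^3$ coincides with that of $\RR^3$. Plugging this into Bérard--Meyer's symmetrization procedure, any open $\Omega\subset\mathbb{T}^3$ with $|\Omega|\le V_*$ satisfies the Euclidean Faber--Krahn bound
\[
\lambda_1(\Omega)\,|\Omega|^{2/3}\ge \pi^2(4\pi/3)^{2/3},\qquad\text{equivalently}\quad |\Omega|\ge \tfrac{4\pi^4}{3}\,\lambda_1(\Omega)^{-3/2}.
\]

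For $\lambda$ large enough that $\tfrac{4\pi^4}{3}\lambda^{-3/2}\le V_*$, every nodal domain $D$ of an eigenfunction associated with $\lambda$ satisfies $\lambda_1(D)=\lambda$ and $|D|\le V_*$, so the bound applies, and summing over nodal domains (total volume~$1$) gives $\nu(u)\le \tfrac{3}{4\pi^4}\lambda^{3/2}$. On the other hand $\kappa(\lambda)-1=\#\{n\in\ZZ^3 : 4\pi^2|n|^2<\lambda\}$, so standard Gauss-type lattice estimates provide an explicit inequality of the form $\kappa(\lambda)\ge \tfrac{\lambda^{3/2}}{6\pi^2}-C\lambda$ valid for $\lambda$ past an explicit starting point. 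Since $\tfrac{1}{6\pi^2}>\tfrac{3}{4\pi^4}$ (the leading-order Pleijel ratio is $\tfrac{9}{2\pi^2}<1$), there is an explicit $\lambda_0$ such that $\kappa(\lambda)>\nu(u)$ for every eigenfunction with eigenvalue $\ge\lambda_0$: no such eigenvalue can be Courant-sharp.

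It remains to test the finitely many eigenvalues $\lambda=4\pi^2 m<\lambda_0$ one by one. For $m=0$ the constant eigenfunction gives $\nu=\kappa=1$. For $m=1$ (multiplicity $r_3(1)=6$, so $\kappa(4\pi^2)=2$) every real combination of $\sin(2\pi x_i)$, $\cos(2\pi x_i)$ splits $\mathbb{T}^3$ into exactly two slabs, so the indices $k=1,\dots,7$ are Courant-sharp as claimed. For each remaining $m$ I compute $\kappa$, then bound $\nu(u)$ from above over the entire real eigenspace by analysing the zero sets of the corresponding trigonometric polynomials on $\mathbb{T}^3$, exploiting parity and translation symmetries and, where necessary, small perturbations inside the eigenspace to exclude equality. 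The main obstacle is precisely this last step: $r_3(m)$ fluctuates irregularly and can be large, so at borderline values of $m$ one must control the nodal set of a generic trigonometric polynomial in three variables, which is genuinely intricate; keeping $\lambda_0$ as small as possible by tightening both the Faber--Krahn threshold $V_*$ and the lattice-point estimate is therefore essential to keep the case list tractable.
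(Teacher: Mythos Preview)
Your overall strategy matches the paper's, but there are two genuine gaps.

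\textbf{The Faber--Krahn step.} You claim that Hauswirth--Perez--Romon--Ros gives an explicit $V_*$ such that isoperimetric regions of volume $\le V_*$ in $\mathbb{T}^3$ are round balls, and hence the Euclidean Faber--Krahn inequality holds on $\mathbb{T}^3$ for small domains. This is not what their result says: their theorem concerns $\mathbb{T}^2\times\RR$, not $\mathbb{T}^3$, and the isoperimetric problem on $\mathbb{T}^3$ was (at the time of the paper) not known to be solved even in the small-volume range with an explicit threshold. The paper bypasses this by a cutting argument: given $\Omega\subset\mathbb{T}^3$, one slices by a suitable $2$-torus $\{z=t_z\}$ chosen so that the slice area is at most $|\Omega|$; the cut domain embeds in $\mathbb{T}^2\times\RR$, where the HPRR inequality applies, and the cut adds at most $2|\Omega|$ to the perimeter. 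This yields only the weakened isoperimetric inequality $IR_3\le(|\partial\Omega|+2|\Omega|)|\Omega|^{-2/3}$ and hence a Faber--Krahn inequality with a correction factor $\bigl(1-(2|\Omega|/9\pi)^{1/3}\bigr)^2$. Your clean Euclidean bound is therefore unjustified, and the actual inequality that is available produces slightly worse constants throughout the numerical reduction.

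\textbf{The endgame.} After the numerical reduction, the paper is left with exactly the eigenvalues $\lambda/4\pi^2\in\{0,1,2\}$. The case $m=2$ has $\kappa=8$ and a $12$-dimensional eigenspace; your plan to ``analyse the zero sets of the corresponding trigonometric polynomials'' for each remaining $m$ is not a proof and, for this eigenspace, would be hard to carry out directly. The paper instead uses a Courant-type theorem with respect to the involution $\sigma(x,y,z)=(x+\tfrac12,y+\tfrac12,z+\tfrac12)$: every eigenfunction at $m=2$ is $\sigma$-symmetric (since $m+n+p$ and $m^2+n^2+p^2$ have the same parity), $8\pi^2$ is the second symmetric eigenvalue, and the symmetric Courant bound gives $\nu(u)\le 2\kappa_{S,\sigma}(8\pi^2)=4<8$. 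This symmetry argument is the missing idea in your proposal.

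A minor point: your claim that ``every nodal domain $D$ satisfies $|D|\le V_*$'' for large $\lambda$ is unjustified as stated, but the summation argument survives anyway since any $D$ with $|D|>V_*$ trivially satisfies $|D|\ge \tfrac{4\pi^4}{3}\lambda^{-3/2}$ under your hypothesis on $\lambda$.
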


The proof follows the approach used  by \AA. Pleijel in \cite{Ple56} and in the case of a compact manifold by P. B\'erard et D. Meyer in \cite{BerMey82} (see also \cite{Ber82}). In Section \ref{secIso}, we establish an isoperimetric inequality and we use it to prove a Faber--Krahn-type inequality for domains in $\mathbb{T}^3\,$. This is the most delicate part, since the isoperimetric problem in flat tori has not been solved in full generality in dimension greater than two (see \cite{HowHutMor99} for the case of dimension two). To bypass this obstruction, we combine a partial result obtained by L.~Hauswirth, J.~Perez,  P.~Romon, and A.~Ros in \cite{HauPerRomRos04} with a procedure inspired by the method of P.~B{\'e}rard and D.~Meyer in \cite[Appendix C]{BerMey82} (see also \cite[II]{Ber82}). In Section \ref{secLower}, we get a lower bound on the counting function by an elementary counting argument. In Section \ref{secReduc}, we combine these results to show that eigenvalues whose index is greater than $270$ cannot be Courant-sharp, and we identify a (small) finite set containing all Courant-sharp eigenvalues. In Section \ref{secCourantSym}, we use a Courant-type theorem with symmetry to show that one of the eigenvalues in the previous set is not Courant-sharp. This idea goes back to the work of Leydold (see \cite{Ley96}) and was already used, with a similar aim, in \cite{HelHofTer10a,HelfferPersson2015SquareN,HelKiw15Cube}. The only remaining eigenvalues are those of Theorem \ref{thmPlejielTorus3D}. 

\section{A Faber--Krahn-type inequality}
\label{secIso}

Let us first prove the following version of the isoperimetric inequality. In the rest of this section, $|\cdot|$ stands either for the three-dimensional Lebesgue measure or the two-dimensional Hausdorff measure, and $B^3$ for the (Euclidean) unit ball in $\mathbb{R}^3\,$.

\begin{prop} \label{propIneqIsoTorus} Let $\Omega$ be an open set in $\mathbb {T}^3$ with $\left|\Omega\right|\le \frac{4\pi}{81}\,$. We have 
\begin{equation}
\label{eqIneqIsoTorus}
\left|\partial B^3\right|\left|B^3\right|^{-\frac{2}{3}}\le \left(\left|\partial \Omega\right|+2\left| \Omega\right|\right)|\left|\Omega\right|^{-\frac{2}{3}}\,.
\end{equation}
\end{prop}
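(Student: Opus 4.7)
The target inequality can be rewritten as $|\partial\Omega|\ge c\,|\Omega|^{2/3}-2|\Omega|$ where $c=|\partial B^{3}||B^{3}|^{-2/3}$ is the sharp Euclidean isoperimetric constant. So the claim is a Euclidean isoperimetric inequality with a linear slack term. The slack is exactly what is needed to absorb the boundary cost of one ``cut'' that unwraps a tube-like component of $\Omega$, and the plan is to combine the Hauswirth--Perez--Romon--Ros classification of isoperimetric regions in $\mathbb{T}^3$ with a B\'erard--Meyer cut-and-lift argument. Observe that $\frac{4\pi}{81}$ is the volume of a Euclidean ball of radius $1/3$, whose diameter $2/3$ is strictly less than $1$, so such a ball lifts to $\mathbb{R}^3$; this is the geometric reason the threshold appears.

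My first step would be to reduce to open sets with smooth boundary by a standard approximation (via convolutions or sub-level sets of a mollified indicator), since both sides of \eqref{eqIneqIsoTorus} are stable under $L^1$ convergence with boundedness of the perimeter. Next, I would split according to the topology of the lift of $\Omega$ to $\mathbb{R}^3$: if each connected component of $\Omega$ lifts to a bounded subset of $\mathbb{R}^3$ (equivalently, $\pi_1(\Omega)\to\pi_1(\mathbb{T}^3)$ is trivial on every component), then the classical Euclidean isoperimetric inequality applied componentwise, combined with subadditivity of $x\mapsto x^{2/3}$, already gives $|\partial\Omega|\ge c\,|\Omega|^{2/3}$, which is stronger than the required bound.

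The remaining case is that some component of $\Omega$ ``wraps'' nontrivially. Here I would invoke the partial result of Hauswirth--Perez--Romon--Ros: for $|\Omega|\le \frac{4\pi}{81}$, the only candidates for isoperimetric regions in $\mathbb{T}^3$ are Euclidean balls and (single-direction) solid tubes, with balls being optimal. Consequently a wrapping $\Omega$ of volume $\le\frac{4\pi}{81}$ is already far from isoperimetric unless it wraps in a single coordinate direction. Choose such a direction $e_i$ and, by a Fubini/averaging argument on the cross-section $H_{a}=\{x_i=a\}$, select $a$ with $|\Omega\cap H_a|\le|\Omega|$. Cutting $\Omega$ by $H_a$ and lifting the result to a fundamental cube in $\mathbb{R}^3$ produces a bounded Caccioppoli set $A$ with $|A|=|\Omega|$ and
\[
P_{\mathbb{R}^3}(A)\le |\partial\Omega|_{\mathbb{T}^3}+2\,|\Omega\cap H_a|\le |\partial\Omega|+2|\Omega|,
\]
the factor $2$ arising because the cut appears on two opposite faces of the cube. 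Applying the Euclidean isoperimetric inequality to $A$ gives exactly \eqref{eqIneqIsoTorus}.

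The main obstacle will be the third step: using the partial HPRR classification to conclude that a single cut in one coordinate direction suffices. The delicate point is not the cut-and-lift mechanics, which are clean, but justifying that below the threshold $\frac{4\pi}{81}$ no pathological multi-directional wrapping can occur at a perimeter scale compatible with the desired inequality. I would expect the rigorous argument to either reduce to the HPRR classification via a short comparison with slabs and tubes (to rule out two- or three-directional wrapping at small volumes) or to handle multi-directional wrapping by a more careful Fubini bookkeeping, checking that additional cuts can always be absorbed into the $2|\Omega|$ slack in the relevant volume range.
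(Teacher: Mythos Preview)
Your proposal has a genuine gap, and it stems from a misreading of where the Hauswirth--Perez--Romon--Ros result lives. The theorem you need (their Theorem~18) is an isoperimetric inequality in $\mathbb{T}^2\times\mathbb{R}$, not a classification of isoperimetric regions in $\mathbb{T}^3$. In fact, the isoperimetric problem in $\mathbb{T}^3$ is open in general, which is exactly what the paper is working around. So your sentence ``the only candidates for isoperimetric regions in $\mathbb{T}^3$ are Euclidean balls and (single-direction) solid tubes'' is not a theorem you can invoke, and in any case a classification of \emph{minimizers} would not control an arbitrary $\Omega$ that wraps in two or three directions.

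The paper's argument avoids your entire topological case split. One makes a \emph{single} cut along a coordinate $2$-torus $\mathcal{H}_{z=t}$, with $t$ chosen by Fubini so that $|\Omega\cap\mathcal{H}_{z=t}|\le|\Omega|$, and then lifts the cut set to $\mathbb{T}^2\times\mathbb{R}$ (not to $\mathbb{R}^3$). In $\mathbb{T}^2\times\mathbb{R}$ the HPRR result gives the Euclidean isoperimetric ratio directly for volumes $\le\frac{4\pi}{81}$, and the single cut costs at most $2|\Omega|$ in perimeter, yielding \eqref{eqIneqIsoTorus} immediately. There is no need to analyse whether $\Omega$ wraps, or in how many directions: the remaining two periodic directions are absorbed by the strength of the HPRR inequality in $\mathbb{T}^2\times\mathbb{R}$. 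Your attempt to reach $\mathbb{R}^3$ after one cut cannot succeed when $\Omega$ wraps in the uncut directions, and the ``obstacle'' you flag in your last paragraph is not a technicality but the actual missing idea.
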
 
We introduce the notation $IR_3:=\left|\partial B^3\right|\left|B^3\right|^{-\frac{2}{3}}$ (this is the minimal isoperimetric ratio in three dimensions, in the Euclidean case). We have $IR_3=(36\pi)^{\frac{1}{3}}\,.$

\begin{rem}
	The right-hand side of Inequality \eqref{eqIneqIsoTorus} is not homogeneous. When $|\Omega|$ becomes small, Inequality \eqref{eqIneqIsoTorus} gets closer to the isoperimetric inequality in the Euclidean case, which is asymptotically optimal according to \cite[Lemma II.15]{BerMey82}.\end{rem}

The proof  relies on the following result, which is a special case of \cite[Theorem 18]{HauPerRomRos04}.

\begin{prop} \label{propIneqIso2Per} Let $\mathcal{U}$ be an open set in $\mathbb{T}^2\times \mathbb{R}$ with $\left|\mathcal{U}\right|\le \frac{4\pi}{81}\,$. Then
\begin{equation*}
	\left|\partial B^3\right|\left|B^3\right|^{-\frac{2}{3}}\le \left|\partial \mathcal{U}\right|\left|\mathcal{U}\right|^{-\frac{2}{3}}\,.
\end{equation*}
\end{prop}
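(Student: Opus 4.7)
The plan is to reduce Proposition \ref{propIneqIsoTorus} to Proposition \ref{propIneqIso2Per} by ``unfolding'' $\Omega$ in a single direction and averaging the resulting inequalities over the choice of fundamental domain. Writing $\mathbb{T}^3 = \mathbb{T}^2 \times (\mathbb{R}/\mathbb{Z})$, let $\pi : \mathbb{T}^2 \times \mathbb{R} \to \mathbb{T}^3$ denote the canonical covering map. For each $t \in [0,1)$, set
\[\mathcal{U}_t := \pi^{-1}(\Omega) \cap \bigl(\mathbb{T}^2 \times (t, t+1)\bigr).\]
Each $\mathcal{U}_t$ is an open subset of $\mathbb{T}^2 \times \mathbb{R}$ on which $\pi$ restricts to a measure-preserving bijection onto $\Omega$ (up to a nullset), so $|\mathcal{U}_t| = |\Omega| \le 4\pi/81$, and Proposition \ref{propIneqIso2Per} is available for each $\mathcal{U}_t$.

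The key step is an identity for $|\partial \mathcal{U}_t|$. Write $\Sigma_t := \Omega \cap \bigl(\mathbb{T}^2 \times \{t\}\bigr)$, where $t$ is read modulo $1$. The boundary of $\mathcal{U}_t$ in $\mathbb{T}^2 \times \mathbb{R}$ splits into the lift of $\partial \Omega$ to the open slab $\mathbb{T}^2 \times (t,t+1)$, together with two horizontal ``caps'' sitting at the levels $\{t\}$ and $\{t+1\}$, each of which projects bijectively onto $\Sigma_t$. Provided the slice $\mathbb{T}^2 \times \{t\}$ meets $\partial\Omega$ in a set of vanishing two-dimensional measure, which is the case for a.e.~$t$ by the coarea/Fubini formula applied to the hypersurface $\partial\Omega$, one obtains
\[|\partial \mathcal{U}_t| = |\partial \Omega| + 2|\Sigma_t|.\]
Fubini applied to the characteristic function of $\Omega$ furthermore gives $\int_0^1 |\Sigma_t|\,dt = |\Omega|$.

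Combining these, Proposition \ref{propIneqIso2Per} applied to $\mathcal{U}_t$ yields $IR_3\, |\Omega|^{2/3} \le |\partial \mathcal{U}_t|$ for a.e.~$t \in [0,1)$, and integrating over $t$ gives
\[IR_3\,|\Omega|^{2/3} = \int_0^1 IR_3\,|\Omega|^{2/3}\,dt \le \int_0^1 |\partial \mathcal{U}_t|\,dt = |\partial\Omega| + 2|\Omega|,\]
which is exactly \eqref{eqIneqIsoTorus} after dividing by $|\Omega|^{2/3}$. The three-direction symmetry of $\mathbb{T}^3$ makes the privileged role of the ``third'' coordinate inessential.

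The delicate point is justifying the decomposition $|\partial \mathcal{U}_t| = |\partial \Omega| + 2|\Sigma_t|$ in the regularity class in which Proposition \ref{propIneqIso2Per} is used, since $\Omega$ is only assumed to be open and one must make sense of $|\partial \Omega|$ as a two-dimensional measure. The cleanest route is to work in the framework of sets of finite perimeter and invoke the slicing theorem for BV functions, which guarantees that for a.e.~$t$ the lifted set $\mathcal{U}_t$ has finite perimeter with the stated splitting; if $\partial \Omega$ is assumed piecewise smooth, one can alternatively invoke Sard's theorem to choose $t$ so that $\mathbb{T}^2 \times \{t\}$ is transverse to $\partial \Omega$, in which case the identity is elementary. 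Either way the averaging argument goes through and yields \eqref{eqIneqIsoTorus}.
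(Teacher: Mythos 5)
You have not proved the statement in question. Proposition \ref{propIneqIso2Per} asserts the sharp isoperimetric inequality for open sets of volume at most $\frac{4\pi}{81}$ in the product $\mathbb{T}^2\times\mathbb{R}$, i.e.\ that in the spherical range no competitor beats the Euclidean ball. This is a deep result on the periodic isoperimetric problem; the paper does not prove it either, but imports it as a special case of Theorem 18 of Hauswirth, P\'erez, Romon and Ros \cite{HauPerRomRos04}. What your argument actually does is take Proposition \ref{propIneqIso2Per} as given and deduce from it Proposition \ref{propIneqIsoTorus}, namely inequality \eqref{eqIneqIsoTorus} on the fully periodic torus $\mathbb{T}^3$. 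That reduction is essentially the one carried out in the paper (the paper selects a single slice $\mathcal{H}_{z=t_z}$ with $\left|\Omega\cap\mathcal{H}_{z=t_z}\right|\le\left|\Omega\right|$ via the mean value of the Fubini integral, whereas you average the resulting inequalities over all slices; both yield \eqref{eqIneqIsoTorus}, and the averaging version is a pleasant variant). But as a proof of Proposition \ref{propIneqIso2Per} it is circular: the inequality $IR_3\,|\Omega|^{2/3}\le|\partial\mathcal{U}_t|$ that you integrate over $t$ is precisely an instance of the statement you were asked to establish.

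Nothing in your argument addresses the actual content of Proposition \ref{propIneqIso2Per}: why spheres are isoperimetric in $\mathbb{T}^2\times\mathbb{R}$ for volumes up to $\frac{4\pi}{81}$. A slicing or Fubini argument cannot produce this, because $\mathbb{T}^2\times\mathbb{R}$ is not a further quotient of a space where the answer is already known; one needs the existence and regularity theory for isoperimetric regions in this quotient, the classification of candidate isoperimetric surfaces (spheres, cylinders, pairs of flat tori, and the exclusion of other constant-mean-curvature competitors in the relevant volume range), and the comparison of the associated profiles, all of which is the substance of \cite[Theorem 18]{HauPerRomRos04}. The correct response here is either to cite that theorem, as the paper does, or to reproduce its proof, which is well beyond a Fubini decomposition.
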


Let us comment on the value $\frac{4\pi}{81}$ appearing in Proposition \ref{propIneqIso2Per}. Following \cite{Ros05}, for any positive number $V>0\,$, we call \emph{isoperimetric region} of volume $V$ an open set $\Omega$ in $\mathbb{T}^2\times \mathbb{R}\,$, with $|\Omega|=V$, such that $|\partial \Omega|$ is minimal. We call \emph{isoperimetric surface} the boundary of an isoperimetric region. 
We also define the \emph{isoperimetric profile} $I$ by
\begin{equation*}
	I(V):=\inf\left\{|\partial \Omega|\,:\,\Omega \subset \mathbb{T}^2\times \RR \mbox{ and } |\Omega|=V\right\}\,.
\end{equation*}  
It is conjectured (see for instance \cite[Section 4]{HauPerRomRos04}) that, depending on the volume of the isoperimetric domain that they bound, isoperimetic surfaces in $\mathbb{T}^2\times \RR$ are either balls, cylinders, or pairs of parallel two-dimensional flat tori, of the form $\mathbb{T}^2\times\{t\}$ with $t\in \RR$. More explicitly, we can define, as in \cite[Section 4]{HauPerRomRos04}, the \emph{spheres-cylinders-planes} profile $I_{SCP}$: for $V>0\,$, $I_{SCP}(V)$ is the least possible area for the boundary of a region of volume $V$, among spheres, cylinders, or pairs of parallel flat tori. Computation shows that
\begin{equation*}
	I_{SCP}(V)=\left\{\begin{array}{lll}
						(36\pi V^2)^{1/3}& \mbox{ if } 0<V\le \frac{4\pi}{81}& \mbox{ (spherical range)};\\
						(4\pi V)^{1/2}& \mbox{ if } \frac{4\pi}{81} \le V \le \frac{1}{\pi}& \mbox{ (cylindrical range)};\\
						2 & \mbox{ if } \frac{1}{\pi} \le V &\mbox{ (planar range)}.
					  \end{array}\right.
\end{equation*} 
The above conjecture can be reformulated as $I(V)=I_{SCP}(V)$ for all $V>0\,$. The result \cite[Theorem 18]{HauPerRomRos04} asserts that $I(V)=I_{SCP}(V)$ for $V \in \left(0,\frac{4\pi}{81}\right]\,$, that is to say that spheres are indeed isoperimetric surfaces in the spherical range (the result applies in fact to more general tori than $\mathbb{T}^2$).

Let us now prove Proposition \ref{propIneqIsoTorus}. We consider the canonical coordinates $(x,y,z)$ on $\mathbb{T}^3= \left(\mathbb{R}/\mathbb{Z}\right)^3$. 
For $t\in [0,1)$ we consider the surface $\mathcal{H}_{z=t}$ in $\mathbb{T}^3$ defined by
\[\mathcal{H}_{z=t}=\left\{(x,y,z)\in \mathbb{T}^3\,:\, z=t\right\}.\] 
According to Fubini's theorem
\[\left|\Omega\right|=\int_0^1 \left|\Omega\cap \mathcal{H}_{z=t}\right|\,dt\,.\]
There exists therefore $t_z\in [0,1)$ such that $\left|\Omega\cap \mathcal{H}_{z=t_z}\right|\le \left|\Omega\right|\,$. Let us now consider the open set $\widetilde{\Omega}$ in $\mathbb{T}^3$ defined by 
$\widetilde{\Omega}:=\Omega \setminus \mathcal{H}_{x=t_z}\,.$ It can be considered as a subset of $\mathbb{T}^2\times \RR\,$. More precisely, let us consider the canonical projection
\[\begin{array}{cccc}
		\Pi:& \mathbb{T}^2\times\RR &\to& \mathbb{T}^3\\
					 & (x,y,z)&\mapsto& (x ,y ,z \mbox{ mod } 1)\,.\\
	\end{array}\]
The set $\Pi^{-1}\left(\mathcal{H}_{z=t_z}\right)$ is constituted of a family of parallel two-dimensional flat tori in  $\mathbb{T}^2\times\RR\,$, separated by a distance of $1\,$. They define a partition of $\mathbb{T}^2\times\RR$ into cells of volume $1\,$. Let us denote by $\widetilde{\Omega}_0$ the intersection of one of these cells with $\Pi^{-1}(\Omega)\,$. It has the same volume as $\widetilde{\Omega}\,$, and its boundary has the same area. Applying Proposition \ref{propIneqIso2Per}, we get
\begin{equation}
\label{eqIneqIsoR3}
\left|\partial B^3\right|\left|B^3\right|^{-\frac{2}{3}}\le \left|\partial \widetilde{\Omega}_0\right|\left|\widetilde{\Omega}_0\right|^{-\frac{2}{3}}\,.
\end{equation}
On the other hand, cutting $\Omega$ with the plane $\mathcal{H}_{z=t_z}$ adds $2\left|\mathcal{H}_{z=t_z}\cap \Omega\right|$ to the area of the boundary.  We therefore get
\begin{equation}
\label{eqIneqPerCut}
	\left|\partial \widetilde{\Omega}_0\right|=\left|\partial \widetilde{\Omega}\right|= \left|\partial \Omega\right|+2\left|\mathcal{H}_{z=t_z}\cap \Omega\right| \le \left|\partial \Omega\right|+2\left|\Omega\right|\,.
\end{equation}
Combining Inequalities \eqref{eqIneqIsoR3} and \eqref{eqIneqPerCut}, we get Inequality \eqref{eqIneqIsoTorus}. We have proved Proposition \ref{propIneqIsoTorus}. The above idea of cutting the domain was used in \cite[Appendix C]{BerMey82}, with geodesic balls, to prove an asymptotic isoperimetric inequality for domains in a Riemannian manifold (see \cite[Lemma II.15]{BerMey82}).

Let us now use Inequality \eqref{eqIneqIsoTorus} to obtain a Faber--Krahn-type inequality.
\begin{prop} 
\label{propFaberKrahn}
If $\Omega$ is an open set in $\mathbb{T}^3$ with $\left|\Omega\right|\le \frac{4\pi}{81}\,$, we have
\begin{equation} 
\label{eqIneqFaberKrahnTorus}
\left(1-\left(\frac{2\left|\Omega\right|}{9\pi}\right)^{\frac{1}{3}}\right)^2\lambda_1(B^3)\left|B^3\right|^{\frac{2}{3}}\le \lambda_1(\Omega)\left|\Omega\right|^{\frac{2}{3}}\,,
\end{equation}
where $\lambda_1(B^3)$ is the first eigenvalue of $-\Delta$ in $B^3$ with the Dirichlet condition on $\partial B^3\,$.
\end{prop}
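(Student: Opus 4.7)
The plan is to run the classical P\'olya--Szeg\H{o} / Faber--Krahn argument, feeding in the corrected isoperimetric inequality \eqref{eqIneqIsoTorus} in place of the Euclidean one. Let $u\ge 0$ be a first Dirichlet eigenfunction of $-\Delta_{\mathbb{T}^3}$ on $\Omega$, extended by zero to $\mathbb{T}^3$, and let $u^*$ denote its Schwarz symmetrization on the Euclidean ball $B^*\subset\mathbb{R}^3$ with $|B^*|=|\Omega|$. Write $\mu(t)=|\{u>t\}|=|\{u^*>t\}|$ for their common distribution function. Since $\mu(t)\le|\Omega|\le 4\pi/81$ for every $t\ge 0$, Proposition \ref{propIneqIsoTorus} applies to each super-level set $\Omega_t=\{u>t\}$, giving, for almost every regular value $t>0$, the perimeter bound $|\{u=t\}|\ge IR_3\,\mu(t)^{2/3}-2\mu(t)$.

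Combining this with the co-area identity $-\mu'(t)=\int_{\{u=t\}}|\nabla u|^{-1}\,d\mathcal{H}^2$ and the Cauchy--Schwarz inequality $|\{u=t\}|^2\le -\mu'(t)\cdot\int_{\{u=t\}}|\nabla u|\,d\mathcal{H}^2$ would yield
\[
\int_{\{u=t\}}|\nabla u|\,d\mathcal{H}^2 \;\ge\; \frac{\bigl(IR_3\,\mu(t)^{2/3}-2\mu(t)\bigr)^2}{-\mu'(t)}.
\]
For the radial function $u^*$, the level set $\{u^*=t\}$ is a sphere of area exactly $IR_3\,\mu(t)^{2/3}$ on which $|\nabla u^*|$ is constant, so the same computation holds with equality: $\int_{\{u^*=t\}}|\nabla u^*|\,d\mathcal{H}^2=(IR_3\,\mu(t)^{2/3})^2/(-\mu'(t))$. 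Using the algebraic identity $2/IR_3=(2/(9\pi))^{1/3}$ together with $\mu(t)\le|\Omega|$ then produces the $t$-uniform comparison
\[
\int_{\{u=t\}}|\nabla u|\,d\mathcal{H}^2 \;\ge\; \left(1-\left(\tfrac{2|\Omega|}{9\pi}\right)^{1/3}\right)^{\!2}\int_{\{u^*=t\}}|\nabla u^*|\,d\mathcal{H}^2.
\]

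Integrating in $t$ via the co-area formula $\int|\nabla u|^2\,dx=\int_0^\infty\int_{\{u=t\}}|\nabla u|\,d\mathcal{H}^2\,dt$ (and likewise for $u^*$) produces the P\'olya--Szeg\H{o}-type inequality $\int_\Omega|\nabla u|^2\,dx\ge C\int_{B^*}|\nabla u^*|^2\,dx$ with $C=(1-(2|\Omega|/(9\pi))^{1/3})^2$. Since $u$ and $u^*$ are equimeasurable and $u^*\in H^1_0(B^*)$ is admissible in the Rayleigh quotient defining $\lambda_1(B^*)$, dividing by $\|u\|_{L^2}^2$ and invoking the scaling identity $\lambda_1(B^*)|B^*|^{2/3}=\lambda_1(B^3)|B^3|^{2/3}$ yields exactly \eqref{eqIneqFaberKrahnTorus}. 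The main technical obstacle is the careful justification of the co-area manipulations when $\nabla u$ vanishes on a level set; since $u$ is real-analytic inside $\Omega$, Sard's theorem ensures that almost every $t>0$ is a regular value, and the standard handling of the critical set in the P\'olya--Szeg\H{o} framework (together with the fact that $u^*\in H^1_0(B^*)$ whenever $u\in H^1_0(\Omega)$) makes the argument rigorous.
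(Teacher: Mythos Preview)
Your proof is correct and follows essentially the same approach as the paper: both feed the corrected isoperimetric inequality \eqref{eqIneqIsoTorus} for super-level sets $\Omega_t$ into the classical co-area/symmetrization proof of Faber--Krahn, using $\mu(t)\le|\Omega|$ to replace the volume-dependent correction by the uniform factor $(1-(2|\Omega|/(9\pi))^{1/3})$. The only difference is presentational: the paper states the key inequality $|\partial\omega|\,|\omega|^{-2/3}\ge (1-(2|\Omega|/(9\pi))^{1/3})\,IR_3$ for $|\omega|\le|\Omega|$ and then simply cites the standard Faber--Krahn argument in \cite{BerMey82,Cha01}, whereas you spell out the P\'olya--Szeg\H{o} comparison explicitly via Cauchy--Schwarz on level sets.
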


\begin{rem} 
\label{remRatioFaberKrahn}
We have $\lambda_1(B^3)=\pi^2$ and therefore $\lambda_1(B^3)\left|B^3\right|^{\frac{2}{3}}=\left(\frac{4}{3}\right)^{\frac{2}{3}}\pi^{\frac{8}{3}}\,$.
\end{rem}

\begin{proof} For any open set $\omega$ with $|\omega|\le |\Omega|\,$, we have, thanks to Inequality \eqref{eqIneqIsoTorus},
\begin{equation}
	\label{eqIneqIsoSmall}
	\left(1-\left(\frac{2\left|\Omega\right|}{9\pi}\right)^{\frac{1}{3}}\right)IR_3 \le \left(1-\left(\frac{2\left|\omega\right|}{9\pi}\right)^{\frac{1}{3}}\right)IR_3\le \left|\partial \omega \right|\left|\omega\right|^{-\frac{2}{3}}\,.
\end{equation}
Let us now consider a positive eigenfunction $u$ of $-\Delta_{\mathbb{T}^3}$ on $\Omega$, with Dirichlet boundary condition. For any $t>0$, the level set $\Omega_t:=\{p \in \Omega\,:\, u(p)>t\}$ satisfies $|\Omega_t|\le |\Omega|\,$, and therefore Inequality \eqref{eqIneqIsoSmall} applies with $\omega=\Omega_t\,$. The classical proof of the Faber--Krahn inequality using the co-area formula and the symmetrization of the level sets (see for instance  \cite[I.9]{BerMey82}, or \cite[III.3]{Cha01}), combined with Inequality \eqref{eqIneqIsoSmall}, gives us Inequality \eqref{eqIneqFaberKrahnTorus}.
\end{proof}

\section{Lower bound on the counting function}
\label{secLower}

We define the counting function by  
\[N(\lambda):=\sharp\{k\,:\,\lambda_k(\mathbb{T}^3) <\lambda\}\,.\]

\begin{prop}
\label{propIneqN}
For $\lambda >0$, 
\begin{equation}
\label{eqIneqN}
N(\lambda)\ge   \frac{4\pi}{3}\left(\frac{\sqrt{\lambda}}{2\pi}-\frac{\sqrt{3}}{2}\right)^3\,.
\end{equation}
\end{prop}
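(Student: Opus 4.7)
The approach is direct: write the eigenvalues and multiplicities of $-\Delta_{\mathbb{T}^3}$ explicitly using Fourier series, so that $N(\lambda)$ becomes a lattice-point count inside an open ball, and then bound this count from below by the volume of a slightly smaller concentric ball via the standard unit-cube packing trick.

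More precisely, the plan is the following. First, I would recall that on $\mathbb{T}^3=(\mathbb{R}/\mathbb{Z})^3$ the functions $e^{2\pi i m\cdot x}$ with $m\in\mathbb{Z}^3$ form a complete orthogonal system of eigenfunctions of $-\Delta_{\mathbb{T}^3}$, with eigenvalue $4\pi^2|m|^2$. Therefore
\[
N(\lambda)=\#\bigl\{m\in\mathbb{Z}^3\,:\,4\pi^2|m|^2<\lambda\bigr\}=\#\bigl\{m\in\mathbb{Z}^3\,:\,|m|<R\bigr\},
\]
where I set $R:=\sqrt{\lambda}/(2\pi)$.

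Next I would associate to each lattice point $m\in\mathbb{Z}^3$ the half-open unit cube $Q_m:=m+[-1/2,1/2)^3$ of volume $1$. These cubes tile $\mathbb{R}^3$, and the distance from any point of $Q_m$ to its centre $m$ is at most $\sqrt{3}/2$. The key geometric observation is then that if $R>\sqrt{3}/2$, the Euclidean ball $B(0,R-\sqrt{3}/2)$ is covered by the cubes $Q_m$ with $|m|<R$: indeed, for any $x\in B(0,R-\sqrt{3}/2)$, letting $m$ be the unique lattice point with $x\in Q_m$, the triangle inequality gives $|m|\le|x|+\sqrt{3}/2<R$. Summing the volumes of these cubes then yields
\[
N(\lambda)=\sum_{|m|<R}|Q_m|\ge\bigl|B(0,R-\sqrt{3}/2)\bigr|=\frac{4\pi}{3}\left(\frac{\sqrt{\lambda}}{2\pi}-\frac{\sqrt{3}}{2}\right)^3,
\]
which is exactly Inequality \eqref{eqIneqN}. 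If instead $R\le\sqrt{3}/2$, the right-hand side of \eqref{eqIneqN} is non-positive while $N(\lambda)\ge 0$, so the inequality is trivial.

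There is essentially no serious obstacle here: the argument is a three-dimensional version of the classical Weyl-law lower bound, and the only thing that requires any attention is keeping straight which of the two nested balls plays which role (one wants the \emph{smaller} ball $B(0,R-\sqrt{3}/2)$ to be covered by cubes centred in the larger ball $B(0,R)$, so that the cube count, i.e.\ $N(\lambda)$, dominates the volume of the smaller ball). The trivial regime $R\le\sqrt{3}/2$ should be handled separately to avoid having to take a cube root of a negative number.
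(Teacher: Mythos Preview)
Your proof is correct and follows essentially the same route as the paper: identify $N(\lambda)$ with the number of lattice points of $\mathbb{Z}^3$ in the open ball of radius $\sqrt{\lambda}/(2\pi)$, then use the unit-cube packing and the triangle inequality to show that this count dominates the volume of the concentric ball of radius $\sqrt{\lambda}/(2\pi)-\sqrt{3}/2$. The only difference is that you obtain the lattice-point description of $N(\lambda)$ directly from the complex exponential basis, whereas the paper first works with the real sine/cosine basis indexed by $\mathbb{N}_0^3$ and then matches multiplicities via sign patterns---the paper itself notes your shortcut in a remark following the proof.
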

\begin{proof} 	The eigenvalues of $-\Delta_{\mathbb{T}^3}$ are of  the form 
	\[\lambda_{m,n,p}=4\pi^2(m^2+n^2+p^2)\,,\]
	with $(m,n,p)\in \mathbb{N}_0^3\,$. With each integer triple $(m,n,p)$ we associate a finite dimensional vector space $E_{m,n,p}$ of eigenfunctions such that 	\[L^2(\mathbb{T}^3)=\overline{\bigoplus_{(m,n,p)}E_{m,n,p}}\,.\] 
The vector space $E_{m,n,p}$ is generated by basis functions of the form
\[(x,y,z)\mapsto \varphi(2m\pi x)\psi(2 n \pi y)\chi(2 p \pi z)\,,\]
where $\varphi$, $\psi$, and $\chi$ are sines or cosines (more precisely, it is generated by the functions of this form which are non-zero). It has dimension $2^e\,$, where $e$ is the number of non-zero integers in the triple $(m,n,p)\,$. For all $\lambda>0$ and $e \in\{0,\,1,\,2,3\,\}$, let us denote by $n_{e}(\lambda)$ the number of integer triples $(m,n,p)\in \mathbb{N}^3\,$ having $e$ non-zero components and satisfying $4\pi^2(n^2+m^2+n^2)<\lambda\,$. Taking the dimension of the spaces $E_{m,n,p}$ into account, we have
\[N(\lambda)=n_0(\lambda)+2n_1(\lambda)+4n_2(\lambda)+8n_3(\lambda).\]
Let us now denote by $B_{\lambda}$ the open ball in $\mathbb{R}^3$ of center $0$ and radius $\sqrt{\lambda}/2\pi\,$. We define
\[n(\lambda):=\sharp\left(\mathbb{Z}^3\cap B_{\lambda}\right)\,.\]
It is easy to see, by taking into account all the possible sign patterns, that
\[n(\lambda)=n_0(\lambda)+2n_1(\lambda)+4n_2(\lambda)+8n_3(\lambda)=N(\lambda)\,.\]
Let us now obtain a lower bound of $n(\lambda)\,$. With  each point $(m,n,p)$ in $\mathbb{Z}^3\cap B_{\lambda}$, we associate the cube 
\[\mathcal{C}_{m,n,p}=\left[m-\frac12,m+\frac12\right]\times \left[n-\frac12,n+\frac12\right] \times \left[p-\frac12,p+\frac12\right]\,.\] 
We have
\begin{equation}
\label{eqCounting}
 n(\lambda)=\left|\bigcup_{(m,n,p)\in \mathbb{Z}^3\cap  B_{\lambda}}C_{m,n,p}\right|\,.
 \end{equation}
Let us now show that we have
\begin{equation}
\label{eqInclusion}
B\left(0,\frac{\sqrt{\lambda}}{2\pi}-\frac{\sqrt{3}}{2}\right)\subset\bigcup_{(m,n,p)\in \mathbb{Z}^3\cap  B_{\lambda}}C_{m,n,p}\,,
\end{equation}
where $B\left(0,\frac{\sqrt{\lambda}}{2\pi}-\frac{\sqrt{3}}{2}\right)$ is the ball in $\RR^3$ of center $0$ and radius ${\sqrt{\lambda}}/{2\pi}-{\sqrt{3}}/{2}\,$. Indeed, let $(x,y,y)\in B\left(0,\frac{\sqrt{\lambda}}{2\pi}-\frac{\sqrt{3}}{2}\right)$, and let $m$, $n$, and $p$ be the closest integers to $x$, $y$ and $z$ respectively. On the one hand
\[(x,y,z)\in C_{m,n,p}\,,\]
and on the other hand, by the triangle inequality,
\[\sqrt{n^2+n^2+p^2}\le \sqrt{x^2+y^2+z^2}+\frac{\sqrt3}{2}<\frac{\sqrt{\lambda}}{2\pi}\,,\]
so that $(m,n,p)\in \ZZ^3\cap B_{\lambda}\,$. Inclusion \eqref{eqInclusion} together with Equality \eqref{eqCounting} gives us Inequality \eqref{eqIneqN}.
\end{proof}

\begin{rem}
	The equality $N(\lambda)=n(\lambda)$ can be proved more directly, by considering the complex Hilbert space of complex-valued $L^2$-functions on $\mathbb{T}^3$, and by choosing the functions $(x,y,z)\mapsto e^{i2 m\pi x}e^{i2 n\pi y}e^{i2 p\pi z}\,$, with $(m,n,p)\in \ZZ^3\,$, as an orthogonal basis of eigenfunctions of $-\Delta_{\mathbb{T}^3}\,$.
\end{rem}

\section{Reduction to a finite set of eigenvalues}
\label{secReduc}

We now proceed with the proof of Theorem \ref{thmPlejielTorus3D}.
\begin{lem} 
\label{lemPleijel}
If $\lambda$ is an eigenvalue of  $-\Delta_{\mathbb{T}^3}$ that has an associated eigenfunction $u$ with $k$ nodal domains, $k\ge 7\,$, then
\begin{equation} 
\label{eqIneqNodalDom}
\left(1-\left(\frac{2}{9\pi k}\right)^{\frac{1}{3}}\right)^2\left(\frac{4\pi^4k}{3}\right)^{\frac{2}{3}}\le \lambda\,.
\end{equation}
\end{lem}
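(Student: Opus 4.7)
The plan is a direct application of the Faber--Krahn-type inequality (Proposition \ref{propFaberKrahn}) to a well-chosen nodal domain of $u$.

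Let $D_1,\ldots,D_k$ denote the nodal domains of $u$. On each $D_i$, the restriction $u|_{D_i}$ is an eigenfunction of $-\Delta_{\mathbb{T}^3}$ for the eigenvalue $\lambda$ with Dirichlet boundary condition that does not change sign, so it must be a first Dirichlet eigenfunction on $D_i$; hence $\lambda_1(D_i)=\lambda$ for every $i$. Since $\{D_i\}_{i=1}^k$ is a disjoint family in $\mathbb{T}^3$, we have $\sum_{i=1}^k |D_i|\le |\mathbb{T}^3|=1$, and therefore there exists an index $i_0$ with $|D_{i_0}|\le 1/k$.

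Because $k\ge 7>81/(4\pi)$, we have $|D_{i_0}|\le 1/k<4\pi/81$, so Proposition \ref{propFaberKrahn} applies to $D_{i_0}$ and gives
\[
\left(1-\left(\frac{2|D_{i_0}|}{9\pi}\right)^{1/3}\right)^{2}\lambda_1(B^3)\,|B^3|^{2/3}\le \lambda\,|D_{i_0}|^{2/3}.
\]
Writing $v=|D_{i_0}|$, the function $v\mapsto \left(1-\left(\frac{2v}{9\pi}\right)^{1/3}\right)^{2}v^{-2/3}$ is decreasing on $(0,4\pi/81]$, so replacing $v$ by the larger value $1/k$ only weakens the lower bound on $\lambda$. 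This yields
\[
\lambda\ge \lambda_1(B^3)\,|B^3|^{2/3}\left(1-\left(\frac{2}{9\pi k}\right)^{1/3}\right)^{2}k^{2/3}.
\]

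It remains to simplify the constant. Using $\lambda_1(B^3)=\pi^2$ (Remark \ref{remRatioFaberKrahn}) and $|B^3|=4\pi/3$, we compute
\[
\lambda_1(B^3)\,|B^3|^{2/3}\cdot k^{2/3}
=\pi^{2}\left(\frac{4\pi}{3}\right)^{2/3}k^{2/3}
=\left(\pi^{3}\cdot\frac{4\pi}{3}\cdot k\right)^{2/3}
=\left(\frac{4\pi^{4}k}{3}\right)^{2/3},
\]
which produces exactly Inequality \eqref{eqIneqNodalDom}. The only step requiring care is checking that the hypothesis $|D_{i_0}|\le 4\pi/81$ of Proposition \ref{propFaberKrahn} can be enforced, which is precisely why the assumption $k\ge 7$ appears in the statement.
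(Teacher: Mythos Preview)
Your proof is correct and follows essentially the same approach as the paper: pick a nodal domain of volume at most $1/k$, use $k\ge 7$ to ensure the volume hypothesis of Proposition~\ref{propFaberKrahn}, apply that Faber--Krahn-type inequality, and then simplify the constant via Remark~\ref{remRatioFaberKrahn}. Your version is slightly more explicit (you spell out the monotonicity of $v\mapsto(1-(2v/(9\pi))^{1/3})^2v^{-2/3}$ and the constant computation), but there is no substantive difference.
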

\begin{proof} Since $\left|\mathbb{T}^3\right|=1\,$, one of the nodal domains of $u$ has an area no larger than $\frac{1}{k}\,$.  Let us denote this nodal domain by $D\,$. We have $|D| \le \frac{1}{7} < \frac{4\pi}{81}\,$.  According to Proposition \ref{propFaberKrahn},
\[\lambda=\lambda_1(D)\ge |D|^{-\frac{2}{3}}\left(1-\left(\frac{2\left|D\right|}{9\pi}\right)^{\frac{1}{3}}\right)^2\lambda_1(B^3)\left|B^3\right|^{\frac{2}{3}} \ge k^{\frac{2}{3}}\left(1-\left(\frac{2}{9\pi k}\right)^{\frac{1}{3}}\right)^2\lambda_1(B^3)\left|B^3\right|^{\frac{2}{3}}.\]
Using Remark \ref{remRatioFaberKrahn}, we get Inequality \eqref{eqIneqNodalDom}.
\end{proof}

\begin{cor} If $\lambda$ is a Courant-sharp eigenvalue, $\kappa(\lambda) \le 269\,$.
\end{cor}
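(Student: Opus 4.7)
The plan is to combine Lemma \ref{lemPleijel} with Proposition \ref{propIneqN} by substitution, obtaining a single inequality in $k := \kappa(\lambda)$ that can only be satisfied for bounded $k$. First I would translate the Courant-sharp hypothesis into usable form: since the eigenvalues are listed in non-decreasing order and $\kappa(\lambda)$ is by definition the smallest index with $\lambda_{\kappa(\lambda)}(\mathbb{T}^3)=\lambda$, the counting function satisfies $N(\lambda)=k-1$; moreover, by the Courant-sharp hypothesis there exists an eigenfunction associated with $\lambda$ having exactly $k$ nodal domains. Assuming $k\ge 7$ (the opposite case is trivially consistent with the desired conclusion), Lemma \ref{lemPleijel} yields
\[
\lambda \;\ge\; F(k) \;:=\; \left(1-\left(\frac{2}{9\pi k}\right)^{\frac{1}{3}}\right)^{2}\left(\frac{4\pi^{4} k}{3}\right)^{\frac{2}{3}}.
\]

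Next I would feed this lower bound into Proposition \ref{propIneqN}. The right-hand side of \eqref{eqIneqN} is monotone increasing in $\lambda$ once $\sqrt{\lambda}/(2\pi) \ge \sqrt{3}/2$, and $F(k)$ clears this threshold comfortably as soon as $k\ge 7$. Combining with $N(\lambda)=k-1$ then produces a purely one-variable inequality,
\[
k-1 \;\ge\; \frac{4\pi}{3}\left(\frac{\sqrt{F(k)}}{2\pi}-\frac{\sqrt{3}}{2}\right)^{3} \;=:\; G(k).
\]
A quick asymptotic analysis shows that this must eventually fail: as $k\to\infty$, $\sqrt{F(k)}/(2\pi) \sim (\pi k/6)^{1/3}$, hence $G(k) \sim \frac{2\pi^{2}}{9}\,k$, and since $\frac{2\pi^{2}}{9}>2>1$, the inequality $k-1\ge G(k)$ cannot persist for large $k$.

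The final step is a finite numerical verification to pin down the threshold. Direct evaluation shows that $G(k)>k-1$ at $k=270$, and a short monotonicity check on $G(k)-(k-1)$ (together with the asymptotic slope being strictly greater than $1$) confirms that the inequality remains violated for every $k\ge 270$. This rules out $\kappa(\lambda)\ge 270$ and yields the stated bound. The only real obstacle is the arithmetic at the threshold: because the crossover is relatively close to the stated value, one needs to carry enough decimal places in $F$ and $G$ to evaluate the comparison reliably, but there is no conceptual subtlety beyond this.
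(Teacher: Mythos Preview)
Your proposal is correct and follows essentially the same route as the paper: combine the Faber--Krahn bound from Lemma~\ref{lemPleijel} with the counting-function bound from Proposition~\ref{propIneqN} to force a contradiction for large $\kappa(\lambda)$. The paper streamlines the endgame by first rewriting each bound as a linear constraint in $\kappa(\lambda)^{1/3}$ on $\sqrt{\lambda}/(2\pi)$, namely $\sqrt{\lambda}/(2\pi)\ge (\pi/6)^{1/3}\kappa^{1/3}-\tfrac{1}{3}$ and $\sqrt{\lambda}/(2\pi)< (3/(4\pi))^{1/3}\kappa^{1/3}+\tfrac{\sqrt{3}}{2}$, which gives the closed-form threshold $\kappa<\bigl((\tfrac{\sqrt3}{2}+\tfrac13)/((\pi/6)^{1/3}-(3/(4\pi))^{1/3})\bigr)^3\approx 269.65$ and removes the need for your separate monotonicity check.
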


\begin{proof} If $\lambda$ is an eigenvalue, $\kappa(\lambda)=N(\lambda)+1>N(\lambda)\,$. From Proposition \ref{propIneqN}, we obtain 
\[\frac{\sqrt{\lambda}}{2\pi}<\left(\frac{3}{4\pi}\right)^{\frac{1}{3}}\kappa(\lambda)^{\frac{1}{3}}+\frac{\sqrt{3}}{2}é\,,\]
while, if $\lambda$ is a Courant-sharp eigenvalue (with $\kappa(\lambda)\ge 7$), Lemma \ref{lemPleijel} implies
\[\frac{\sqrt{\lambda}}{2\pi}\ge \left(\frac{\pi}{6}\right)^{\frac{1}{3}}\kappa(\lambda)^{\frac{1}{3}}-\frac{1}{3}\,.\]
We conclude that $\lambda$ cannot be Courant-sharp if
\[\kappa(\lambda)\ge \left(\frac{\frac{\sqrt{3}}{2}+\frac{1}{3}}{\left(\frac{\pi}{6}\right)^{\frac{1}{3}}-\left(\frac{3}{4\pi}\right)^{\frac{1}{3}}}\right)^3\simeq 269.65\,.\] \end{proof}

\begin{cor}
	\label{corCourantSharp}
	If $\lambda$ is a Courant-sharp eigenvalue of $-\Delta_{\mathbb{T}^3}\,$ with $\kappa(\lambda)\ge 7\,$, then 
	\begin{equation}
	\label{eqIneqCourantSharp}
	\kappa(\lambda)\le \left(\left(\frac{3}{4\pi^4}\right)^{\frac{1}{3}}\sqrt{\lambda}+\left(\frac{2}{9\pi}\right)^{\frac{1}{3}}\right)^3\,.
	\end{equation}
\end{cor}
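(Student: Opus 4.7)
The plan is to derive this inequality as a direct algebraic rearrangement of Lemma \ref{lemPleijel}. Since $\lambda$ is Courant-sharp, there exists an associated eigenfunction $u$ with $\nu(u)=\kappa(\lambda)\,$, so setting $k=\kappa(\lambda)\ge 7$ the hypotheses of Lemma \ref{lemPleijel} are met, and we obtain
\[
\left(1-\left(\tfrac{2}{9\pi \kappa(\lambda)}\right)^{1/3}\right)^{2}\left(\tfrac{4\pi^{4}\kappa(\lambda)}{3}\right)^{2/3}\le \lambda\,.
\]

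Next I would take square roots (both sides are non-negative, and the left factor is positive because $\kappa(\lambda)\ge 7$) to get
\[
\left(\tfrac{4\pi^{4}\kappa(\lambda)}{3}\right)^{1/3}-\left(\tfrac{2}{9\pi \kappa(\lambda)}\right)^{1/3}\left(\tfrac{4\pi^{4}\kappa(\lambda)}{3}\right)^{1/3}\le \sqrt{\lambda}\,.
\]
The key simplification is that the cross term is independent of $\kappa(\lambda)$: the product under the cube root is $\frac{2}{9\pi}\cdot\frac{4\pi^{4}}{3}=\frac{8\pi^{3}}{27}\,$, so the cross term equals $\frac{2\pi}{3}\,$. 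Hence
\[
\left(\tfrac{4\pi^{4}\kappa(\lambda)}{3}\right)^{1/3}\le \sqrt{\lambda}+\tfrac{2\pi}{3}\,.
\]

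Finally I would cube and multiply by $\frac{3}{4\pi^{4}}$ to isolate $\kappa(\lambda)\,$, then factor the constant $\left(\tfrac{3}{4\pi^{4}}\right)^{1/3}$ back inside the cube. To match the stated form, one verifies the identity
\[
\left(\tfrac{3}{4\pi^{4}}\right)^{1/3}\cdot\tfrac{2\pi}{3}=\left(\tfrac{3}{4\pi^{4}}\cdot\tfrac{8\pi^{3}}{27}\right)^{1/3}=\left(\tfrac{2}{9\pi}\right)^{1/3}\,,
\]
which shows that $\frac{3}{4\pi^{4}}\bigl(\sqrt{\lambda}+\tfrac{2\pi}{3}\bigr)^{3}$ coincides with the right-hand side of \eqref{eqIneqCourantSharp}. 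The manipulation is entirely routine once the miraculous cancellation producing the constant $\frac{2\pi}{3}$ is noticed, and there is no real obstacle beyond keeping track of the cube roots.
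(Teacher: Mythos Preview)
Your proof is correct and is precisely the argument the paper has in mind: the corollary is stated without proof because it is an immediate algebraic rearrangement of Lemma~\ref{lemPleijel} applied with $k=\kappa(\lambda)$, exactly as you carry out. The cancellation you call ``miraculous'' is the intended point --- it is the same computation that produces the term $\left(\tfrac{\pi}{6}\right)^{1/3}\kappa(\lambda)^{1/3}-\tfrac{1}{3}$ in the proof of the preceding corollary (divide your displayed inequality $\left(\tfrac{4\pi^{4}\kappa(\lambda)}{3}\right)^{1/3}\le \sqrt{\lambda}+\tfrac{2\pi}{3}$ by $2\pi$).
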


\begin{cor}
	If $\lambda$ is a Courant-sharp eigenvalue, $\frac{\lambda}{4\pi^2} \in \{0,1,2\}\,$.
\end{cor}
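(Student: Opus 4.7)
The plan is to invoke Corollary \ref{corCourantSharp} and rule out every eigenvalue $\lambda$ for which $s := \lambda/(4\pi^2)$ satisfies $s \ge 3$. First I would simplify the right-hand side of \eqref{eqIneqCourantSharp}: the identities $\left(\tfrac{3}{4\pi^4}\right)^{1/3}\!\cdot 2\pi = \left(\tfrac{6}{\pi}\right)^{1/3}$ and $\left(\tfrac{2}{9\pi}\right)^{1/3} = \tfrac{1}{3}\left(\tfrac{6}{\pi}\right)^{1/3}$ turn the bound into
\[\kappa(\lambda) \le \frac{6}{\pi} \left(\sqrt{s}+\frac{1}{3}\right)^3.\]
The goal is then to verify that this inequality is violated for every integer $s \ge 3$ that is a sum of three squares.

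To handle the left-hand side, I would use $\kappa(\lambda) = N(\lambda)+1$ together with the exact formula $N(4\pi^2 s) = \#\{(m,n,p) \in \ZZ^3 : m^2+n^2+p^2 < s\}$ established in Section~\ref{secLower}. The preceding corollary already restricts Courant-sharpness to $\kappa(\lambda) \le 269$, which limits attention to $s \le 17$. Within this range I would tabulate $\kappa(\lambda)$ exactly by enumerating, for each $s' < s$, the representations of $s'$ as a sum of three squares and multiplying by the appropriate permutation and sign factors.

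A direct comparison then finishes the proof. For $s=3$ one gets $\kappa(\lambda)=20$ against $\tfrac{6}{\pi}(\sqrt{3}+\tfrac13)^3 \approx 16.83$; for $s=4$, $\kappa(\lambda)=28$ against $24.26$; and the gap only widens as $s$ grows, since the leading coefficients in the asymptotics are $4\pi/3$ on the left and $6/\pi$ on the right, and $4\pi/3 > 6/\pi$ (equivalently $\pi^2 > 9/2$). The value $s=2$ gives $\kappa(\lambda)=8 \le \tfrac{6}{\pi}(\sqrt{2}+\tfrac13)^3 \approx 10.18$, so this value survives the inequality and must be treated separately (which is done in Section~\ref{secCourantSym}), while $s=0,1$ are trivially allowed.

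The main obstacle is the careful bookkeeping of the sum-of-three-squares counts for $0 \le s' \le 16$, in particular at the values with several representations (for instance $s'=9 = 3^2 = 2^2+2^2+1^2$, contributing $6+24=30$ triples), and remembering that $s'=7$ and $s'=15$ contribute nothing by Legendre's three-square theorem, and that $s=7,15$ are likewise not eigenvalues and can be skipped. Beyond this arithmetic care, no conceptual difficulty arises.
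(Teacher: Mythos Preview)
Your proposal is correct and follows essentially the same route as the paper: invoke Corollary~\ref{corCourantSharp}, restrict to $\kappa(\lambda)\le 269$ via the preceding corollary, tabulate $\kappa(\lambda)$ for the finitely many remaining eigenvalues, and compare with $K(\lambda)$. Your rewriting of $K(\lambda)$ as $\tfrac{6}{\pi}\bigl(\sqrt{s}+\tfrac13\bigr)^3$ is a clean simplification the paper does not make, and your remarks on the arithmetic bookkeeping (e.g.\ $s'=9$ contributing $30$, $s'=7,15$ contributing nothing) reproduce exactly the entries of the paper's Table~\ref{tabEigVal}; just be sure the finite check is carried out in full rather than relying on the asymptotic heuristic $4\pi/3>6/\pi$, which by itself does not guarantee monotonicity of the gap over the range $3\le s\le 17$.
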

\begin{proof}
	Table \ref{tabEigVal} gives the first $305$ eigenvalues of  $-\Delta_{\mathbb{T}^3}\,$. In this table
	\[K(\lambda):=\left(\left(\frac{3}{4\pi^4}\right)^{\frac{1}{3}}\sqrt{\lambda}+\left(\frac{2}{9\pi}\right)^{\frac{1}{3}}\right)^3\,.\]
	The quantity $K(\lambda)$ is not given for $\frac{\lambda}{4\pi^2} \in \{0,1\}\,$, since $\kappa(\lambda)\le 6$ in those cases and Corollary \ref{corCourantSharp} does not apply. The table shows that if $\frac{\lambda}{4\pi^2}\ge 3\,$, $\kappa(\lambda)>K(\lambda)\,$, and therefore, according to Corollary \ref{corCourantSharp}, $\lambda$ is not Courant-sharp.
\end{proof}

\begin{table}
	\centering
	\caption{The first $305$ eigenvalues\label{tabEigVal}}
	\begin{tabular}{|c|c|c|c|}
	\hline  
	$\frac{\lambda}{4\pi^2}$&multiplicity&$\kappa(\lambda)$&$K(\lambda)$\\ 
	\hline 
    $0$ &     $1$ &     $1$ &      \\  
    $1$ &     $6$ &     $2$ &      \\  
    $2$ &     $12$ &    $8$ &      $10.19$\\  
    $3$ &     $8$ &     $20$ &     $16.83$\\  
    $4$ &     $6$ &     $28$ &     $24.26$\\  
    $5$ &     $24$ &    $34$ &     $32.40$\\  
    $6$ &     $24$ &    $58$ &     $41.16$\\   
    $8$ &     $12$ &    $82$ &     $60.37$\\  
    $9$ &     $30$ &    $94$ &     $70.74$\\   
    $10$ &     $24$ &    $124$ &   $81.58$\\  
    $11$ &    $24$ &    $148$ &    $92.87$\\  
    $12$ &    $8$ &     $172$ &    $104.59$\\  
    $13$ &    $24$ &    $180$ &    $116.71$\\  
    $14$ &    $48$ &    $204$ &   $129.24$\\  
    $16$ &    $6$  &    $252$ &     $155.41$ \\ 
    $17$ &    $48$ &    $258$ &    $169.03$\\
    \hline 
    \end{tabular} 
\end{table}

The eigenvalues $0$ and $4\pi^2$ are obviously Courant-sharp. To prove Theorem \ref{thmPlejielTorus3D}, we have to show that the eigenvalue $8\pi^2$ is not Courant-sharp, which we will do in the next section.

\section{Courant-type theorem with symmetry}
\label{secCourantSym}

To prove that the eigenvalue $8\pi^2\,$ is not Courant-sharp, we rely on a Courant-type theorem with symmetry, an idea introduced in \cite{Ley96}, and used in \cite{HelHofTer10a,HelfferPersson2015SquareN,HelKiw15Cube} with an objective similar to ours. 

Let $\sigma$ be the isometry of $\mathbb{T}^3$ defined, in the standard coordinates, by
\[\sigma(x,y,z)=(x+1/2\mbox{ mod }1,y+1/2\mbox{ mod }1,z+1/2\mbox{ mod }1)\,.\]
In particular, $\sigma \circ \sigma$ is the identity.
  We call a function $u$ \emph{symmetric} (resp. \emph{antisymmetric}) if $u\circ \sigma=u$ (resp. $u\circ\sigma=-u$). We denote by $L^2_{S,\sigma}(\mathbb{T}^3)$ (resp. $L^2_{A,\sigma}(\mathbb{T}^3)$) the subspace of $L^2(\mathbb{T}^3)$ consisting of all symmetric (resp. antisymmetric) functions. We have the orthogonal decomposition: 
\begin{equation*}
	L^2(\mathbb{T}^3)=L^2_{S,\sigma}(\mathbb{T}^3)\oplus L^2_{A,\sigma}(\mathbb{T}^3).
\end{equation*}
Furthermore, since $\sigma$ is an isometry, $-\Delta_{\mathbb{T}^3}(u\circ \sigma)=\left(-\Delta_{\mathbb{T}^3}u\right)\circ \sigma\,$ for all $u\,$. This implies that both subspaces $L^2_{S,\sigma}(\mathbb{T}^3)$ and $L^2_{A,\sigma}(\mathbb{T}^3)$ are stable under the action of $-\Delta_{\mathbb{T}^3}\,$. Let us now denote by $H_{S,\sigma}$ (resp. $H_{A,\sigma}$) the restriction of $-\Delta_{\mathbb{T}^3}$ to $L^2_{S}(\mathbb{T}^3)$ (resp. $L^2_{A}(\mathbb{T}^3)$), and by $(\lambda_k^{S,\sigma})_{k\ge 1}$ (resp. $(\lambda_k^{A,\sigma})_{k\ge 1}$) the spectrum of $H_{S,\sigma}$ (resp. $H_{A,\sigma}$). The spectrum $(\lambda_k(\mathbb{T}^2))_{k\ge 1}$ is the reunion of $(\lambda_k^{S,\sigma})_{k\ge 1}$ and $(\lambda_k^{A,\sigma})_{k\ge 1}\,$, counted with multiplicities. If $\lambda$ is an eigenvalues of $H_{S,\sigma}\,$, we define the index $\kappa_{S,\sigma}(\lambda)$ by
\[\kappa_{S,\sigma}(\lambda):=\inf\left\{k\,:\, \lambda_k^{S,\sigma}=\lambda\right\}\,.\] 
In the same way, if $\lambda$ is an eigenvalue of $H_{A,\sigma}\,$, we define
\[\kappa_{A,\sigma}(\lambda):=\inf\left\{k\,:\, \lambda_k^{A,\sigma}=\lambda\right\}.\]

Let us now consider $u\,$, a symmetric eigenfunction of $-\Delta_{\mathbb{T}^3}\,$, and $D$, a nodal domain of $u\,$. The set $\sigma(D)$ is also a nodal domain of $u\,$. Either $\sigma(D)=D\,$, in which case we will say that $D$ is symmetric, or we have a pair $\left\{D,\sigma(D)\right\}$ of isometric nodal domains. We denote by $\alpha(u)$ the number of symmetric nodal domains of $u\,$, and by $\beta(u)$ the number of pairs of isometric nodal domains, so that $\nu(u)=\alpha(u)+2\beta(u)$.

If $u$ is an antisymmetric eigenfunction, and if $D$ is a nodal domain of $u\,$, then $\sigma(D)$ is also a nodal domain of $u\,$, distinct from $D$ since the signs of $u$ on $D$ and  $\sigma(D)$ are opposite. Therefore the nodal domains of $u$ can be regrouped into pairs of isometric nodal domains. We denote by $\gamma(u)$ the number of pairs, so that $\nu(u)=2\gamma(u)\,$.

Let us now state a Courant-type theorem with the symmetry $\sigma\,$. The proof is a simple variation of Courant's original argument and will not be given here, for more details see \cite{HelHofTer10a} and references therein.
\begin{thm}
\label{thmCourantSym}
If $\lambda$ is an eigenvalue of $H_{S,\sigma}$ and $u$ an associated symmetric eigenfunction, then
\begin{equation}
	\label{eqIneqSym}
	\alpha(u)+\beta(u)\le \kappa_{S,\sigma}(\lambda)\,.
\end{equation}
If $\lambda$ is an eigenvalue of $H_{A,\sigma}$ and $u$ an associated antisymmetric eigenfunction, then
\begin{equation}
	\label{eqIneqAntiSym}
	\gamma(u)\le \kappa_{A,\sigma}(\lambda)\,.
\end{equation}
\end{thm}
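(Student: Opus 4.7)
My plan is to adapt Courant's classical argument to each invariant subspace, using test functions that already carry the desired symmetry. For \eqref{eqIneqSym}, I would take $\tilde u_i := u\,\mathbf{1}_{D_i}$ for each of the $\alpha(u)$ symmetric nodal domains $D_i$ and $\tilde v_j := u\,\mathbf{1}_{E_j \cup \sigma(E_j)}$ for each of the $\beta(u)$ pairs $\{E_j,\sigma(E_j)\}$. Each support is $\sigma$-invariant and $u$ is symmetric, so each test function is symmetric; each lies in $H^1(\mathbb{T}^3)$ because $u$ vanishes on the nodal set; and integration by parts on each support shows that every nontrivial linear combination has Rayleigh quotient $R(v)$ equal to $\lambda$ (the supports being pairwise disjoint). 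This produces $\alpha(u)+\beta(u)$ linearly independent symmetric test functions.

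Setting $k := \kappa_{S,\sigma}(\lambda)$ and arguing by contradiction from $\alpha(u)+\beta(u) \ge k+1$, I would let $\phi_1, \dots, \phi_{k-1}$ be the first $k-1$ eigenfunctions of $H_{S,\sigma}$ (all with eigenvalues strictly less than $\lambda$). Choosing any one of the $\alpha+\beta$ test functions and calling its support $\Omega_0$, the remaining $\ge k$ test functions still span a subspace on which the Rayleigh quotient is constantly $\lambda$, and imposing the $k-1$ linear constraints $\langle \cdot, \phi_i\rangle = 0$ leaves a nontrivial symmetric $v$ in that subspace. By construction $v$ vanishes identically on the open set $\Omega_0$, and $R(v) = \lambda$. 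The min-max principle inside $L^2_{S,\sigma}(\mathbb{T}^3)$ gives $R(v) \ge \lambda_k^{S,\sigma} = \lambda$, so equality holds and $v$ minimizes the Rayleigh quotient under those constraints; hence $v$ is a symmetric eigenfunction of $-\Delta_{\mathbb{T}^3}$ with eigenvalue $\lambda$. The unique continuation principle for $-\Delta u = \lambda u$ then contradicts the vanishing of $v$ on $\Omega_0$.

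The proof of \eqref{eqIneqAntiSym} is strictly parallel: for each of the $\gamma(u)$ pairs $\{D_j, \sigma(D_j)\}$ I would set $w_j := u\,\mathbf{1}_{D_j \cup \sigma(D_j)}$, which is antisymmetric because $u \circ \sigma = -u$ and $\sigma$ exchanges the two components. The same construction, carried out in $L^2_{A,\sigma}(\mathbb{T}^3)$ with $k := \kappa_{A,\sigma}(\lambda)$ and the first $k-1$ eigenfunctions of $H_{A,\sigma}$, yields $\gamma(u) \le \kappa_{A,\sigma}(\lambda)$.

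I do not expect a serious obstacle, since every step is a direct transcription of Courant's original proof to an invariant subspace. The two points that require attention are the $\sigma$-invariance of the chosen supports, which is immediate from the construction, and the legitimacy of the min-max characterization inside each invariant subspace, which is guaranteed by the stability of $L^2_{S,\sigma}(\mathbb{T}^3)$ and $L^2_{A,\sigma}(\mathbb{T}^3)$ under $-\Delta_{\mathbb{T}^3}$ recalled in the excerpt just before the theorem. The only genuinely analytic ingredient is unique continuation for eigenfunctions of the Laplacian, which holds automatically on the real-analytic manifold $\mathbb{T}^3$.
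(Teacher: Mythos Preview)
Your argument is correct and is precisely the ``simple variation of Courant's original argument'' that the paper invokes; the paper itself omits the proof entirely, merely pointing to \cite{HelHofTer10a} and references therein. Your construction of $\sigma$-invariant supports, the min-max step inside the invariant subspace, and the appeal to unique continuation are exactly the standard ingredients one expects here.
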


Let us make one additional remark, inspired by the treatment of the cube with a Dirichlet boundary condition in \cite{HelKiw15Cube}. The basis functions generating the vector space $E_{m,n,p}$ (see the proof of Proposition \ref{propIneqN}) are symmetric if the sum $m+n+p$ is even and antisymmetric if it is odd. For any integer triple $(m,n,p)\,$, $m+n+p$ has the same parity as $m^2+n^2+p^2\,$. This implies that the eigenfunctions associated with a given eigenvalue are either all symmetric or all antisymmetric, according to whether $\frac{\lambda}{4\pi^2}$ is even or odd. Equivalently, the spectra of $H_{S,\sigma}$ and $H_{A,\sigma}$ are disjoint.
 
\begin{rem}
	If $\lambda$ is an eigenvalue of $-\Delta_{\mathbb{T}^3}$, it is an eigenvalue of either $H_{S,\sigma}$ or $H_{A,\sigma}$, as seen above. If we consider an associated eigenfunction, the original Courant theorem (Theorem \ref{thmCourant}) still applies, but Theorem \ref{thmCourantSym} can give more information.
\end{rem}

\begin{rem} 
\label{remUpperSym}
If $u$ is a symmetric eigenfunction associated with the eigenvalue $\lambda\,$, then Inequality \eqref{eqIneqSym} and the equality $\nu(u)=\alpha(u)+2\beta(u)$ imply that $\nu(u)\le 2\kappa_{S,\sigma}(\lambda)\,$. If $u$ is an antisymmetric eigenfunction associated with $\lambda\,$, Inequality \eqref{eqIneqAntiSym} is  equivalent to $\nu(u)\le 2\kappa_{A,\sigma}(\lambda)\,$.  We will actually only use the symmetric case in the following.
\end{rem}

Let us now consider the eigenvalue $8\pi^2\,$. It belongs to the spectrum of $H_{S,\sigma}\,$, and $\kappa_{S,\sigma}(8\pi^2)=2\,$. Any eigenfunction associated with $8\pi^2$ is symmetric, and, according to Remark \ref{remUpperSym}, it has at most $4$ nodal domains. This bound is in fact sharp, since for instance the eigenfunction $(x,y,z)\mapsto \cos(2\pi x)\cos(2\pi y)$ has $4$ nodal domains. On the other hand, $\kappa(8\pi^2)=8\,$, so that $8\pi^2\,$, considered as an eigenvalue of $-\Delta_{\mathbb{T}^3}\,$, is not Courant-sharp. This completes the proof of Theorem \ref{thmCourantSym}.

\paragraph{Acknowledgements} The author thanks Bernard Helffer for introducing him to this problem and for numerous discussions and advices. The author also thanks Pierre B{\'e}rard for his careful reading of successive versions of the present work, and for suggesting numerous corrections and improvements, and Susanna Terracini for discussions of isoperimetric inequalities. Most of the present work was done while the author was visiting the \emph{Centre de Recherches Math{\'e}matiques} in Montr{\'e}al, during the \emph{SMS 2015 Summer School: Geometric and Computational Spectral Theory}. The author wishes to acknowledge the financial support of the CRM-ISM. This work was partially supported by the ANR (Agence Nationale de la Recherche), project OPTIFORM n$^\circ$
ANR-12-BS01-0007-02, and by the ERC, project COMPAT n$^\circ$ ERC-2013-ADG.

{\small

}

\end{document}